\newtheorem{theorem}{Theorem}[section]
\newtheorem{lemma}[theorem]{Lemma}
\newtheorem{corollary}[theorem]{Corollary}
\theoremstyle{definition}
\theoremstyle{remark}
\numberwithin{equation}{section}
\begin{document}

\title{}


\author{}
\address{}
\curraddr{}
\email{}
\thanks{}

\author{}
\address{}
\curraddr{}
\email{}
\thanks{}

\subjclass[2010]{Primary: 05A15, 05C05; Secondary: 05A16}
\title{$k$-protected Vertices In Unlabeled Rooted Plane Trees}
\author{Keith Copenhaver}
\date{}

\maketitle

\begin{abstract}
We find a simple, closed formula for the proportion of vertices which are \textit{k}-protected in all unlabeled rooted plane trees on $n$ vertices. We also find that, as $n$ goes to infinity,  the average rank of a random vertex in a tree of size $n$ approaches 0.727649, and the average rank of the root of a tree of size $n$ approaches 1.62297.


 \noindent{\textbf{Mathematics Subject Classification} \enspace 05A15 $\cdot$ 05A16 $\cdot$ 05C05}

\keywords{tree, enumeration, asymptotics}
\end{abstract}

\pagebreak
\section{Introduction}
Unlabeled rooted plane trees are one of the simplest tree structures. They are used to model any network with a point of origin.  We will ask the question, if you start at a leaf and take only upward steps, how far might we expect to go to reach a vertex?

The modern world is full of different kinds of networks. In every network it could be advantageous or disadvantageous to have highly protected vertices. For example, in many online communities, people must be invited in order to participate. If we were to create a tree of users where each person is connected to the person who invited them, then each time a user adds a member they once again become 1-protected. Thus a high level of protection most likely indicates that someone is not regularly bringing in new members.

In a network that must remain secure, it is desirable to make it difficult to reach the root, so we would like for the root to be highly protected (another question we will address directly), but we also would like nodes in general to have a high level of protection. For example, in a computer network with different levels of access, even if the network is very tall (in other words, the lowest entry point is many steps from the root), if it has a leaf at a high level, the root will still be far more accessible than the height of the network would imply.

\begin{figure}[h]
\begin{tikzpicture}
[level distance=0.8cm,
  level 1/.style={sibling distance=2cm},
  level 2/.style={sibling distance=1cm}]
  \node[circle,draw] {2}
    child {node[circle, draw] {1}
    child {node[circle, draw] {0}}
    		child{node[circle, draw] {1}
    			child{node[circle, draw] {0}}
    		}
    	}
	child {node[circle, draw] {4}
		child{node[circle, draw] {3}
		child{node[circle, draw] {2}
		child{node[circle, draw] {1}
		child{node[circle, draw] {0}
		}
		}
		}	
	}
}
;

\end{tikzpicture}	
\caption{}
\end{figure}

We will call a vertex \textit{k-protected} if we must take at least $k$ downward steps to reach any leaf. In Figure 1, each vertex is labeled by the highest $k$ for which it is $k$-protected (any vertex that is 4-protected is also 3-protected), this is also known as the \textit{rank}. The number of $k$-protected vertices was explored for this class of tree with $k=2$ by Cheon and Shapiro \cite{cheon}. The topic of protected vertices has been examined for random recursive trees by Mahmoud and Ward  \cite{mahmoudward}, $k$-ary trees by Mansour \cite{mansour}, binary search trees by B{\'o}na  \cite{bonasearch}, random phylogenetic trees by B{\'o}na and Flajolet \cite{bonaflajolet}, several types of random trees by Devroye and Janson \cite{devroyeandjanson}, and digital search trees by Du and Prodinger \cite{duandprodinger}.

Throughout this paper any reference to a tree will specifically mean an unlabeled rooted plane tree.

We will also freely use a few facts. The number of such trees on $n$ vertices, $t(n)$ is the $(n-1)$st Catalan number, and the ordinary generating function of the number of such trees on $n$ vertices $t_n$ is $\sum_{n=1}^\infty t(n) x^n=T(x)= \frac{1- \sqrt{1-4x}}2$. The number of all vertices of trees of size $n$, $v(n)$, is the $(n-1)$st central binomial coefficient, and they have the ordinary generating function $\sum_{n=1}^\infty v(n)x^n=V(x) = \frac{x}{\sqrt{1-4x}}$.

\section{Grafting}
Grafting is a process in horticulture where a branch from one tree is removed and a branch from a different tree is inserted in its place. We will do something very similar with the following lemma.
\begin{lemma}
Let $R_k(x)=\sum_{n =1}^\infty r_k(n)\, x^n$ be the ordinary generating function for $r_k(n)$, the number of rooted plane trees on $n$ vertices whose root is $k$-protected. Let $L(x)=\sum_{n=0}^\infty l(n)\, x^n$ be the ordinary generating function for $l(n)$, the number of leaves in all rooted plane trees of size $n+1$ (or, equivalently, all rooted plane trees with $n$ edges). Let $T_k(x)= \sum_{n=1}^\infty t_k(n)\, x^n$ be the ordinary generating function for $t_k(n)$, the number of $k$-protected vertices in all trees of size $n$. Then $T_k(x)=L(x) \cdot R_k(x)$.
\end{lemma}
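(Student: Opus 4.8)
The plan is to set up a grafting bijection whose enumeration is exactly the Cauchy product $L(x)\cdot R_k(x)$, and then read off the identity coefficient by coefficient. The objects counted by $t_k(n)$ are pairs $(\tau,v)$ with $\tau$ a tree on $n$ vertices and $v$ a $k$-protected vertex of $\tau$. Given such a pair, I would \emph{prune}: remove the subtree $\sigma$ consisting of $v$ together with all its descendants, and leave behind just the single vertex $v$, which then becomes a leaf. This yields a tree $\tau'$ with a distinguished leaf (the image of $v$) together with the detached rooted plane tree $\sigma$. The crucial point is that whether a vertex is $k$-protected depends only on the subtree hanging below it, so the root of $\sigma$ is $k$-protected in $\sigma$; hence $\sigma$ is one of the $r_k(m)$ trees counted by $R_k$ if it has $m$ vertices.

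Conversely, given a tree $\tau'$ with a distinguished leaf $\ell$ and a tree $\sigma$ whose root is $k$-protected, I would \emph{graft} $\sigma$ in place of $\ell$, identifying the root of $\sigma$ with $\ell$ and attaching $\sigma$'s subtrees as the children of $\ell$. Because $\ell$ sits in a definite position among its siblings, this produces a well-defined plane tree, and in it the image of $\ell$ has exactly the descendants coming from $\sigma$, so it is $k$-protected; it is the distinguished vertex of the output pair. Pruning and grafting are mutually inverse, so for each $m$ with $1\le m\le n$ the pairs $(\tau,v)$ on $n$ vertices for which the subtree below $v$ has $m$ vertices correspond bijectively to pairs (a tree $\tau'$ on $n-m+1$ vertices with a marked leaf, a tree $\sigma$ on $m$ vertices with $k$-protected root). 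Since a tree on $n-m+1$ vertices has $n-m$ edges, and a tree with $\lambda$ leaves admits $\lambda$ choices of marked leaf, the number of marked-leaf trees with $n-m$ edges is exactly $l(n-m)$.

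Summing over $m$ then gives
\[
t_k(n) = \sum_{m=1}^{n} l(n-m)\, r_k(m),
\]
which is precisely $[x^n]\bigl(L(x)\cdot R_k(x)\bigr)$, since $L$ is indexed by number of edges (starting from $n=0$) and $R_k$ by number of vertices (starting from $n=1$). As this holds for all $n\ge 1$, we conclude $T_k(x)=L(x)\cdot R_k(x)$.

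The part that most needs care is the bookkeeping at the boundary rather than any hard estimate: when $v$ is the root of $\tau$ itself, pruning leaves only the one-vertex tree, the unique tree with $0$ edges, which contributes the constant term $l(0)=1$ of $L(x)$ — this is exactly why $L$ must be indexed by edges and must include the $n=0$ term. I should also state explicitly the two small facts used silently above: that the pruned object $\tau'$ is always a genuine nonempty plane tree in which the image of $v$ is a bona fide leaf (it has no remaining children), and that "total number of leaves over all trees of a given size" equals "number of (tree, marked leaf) pairs" of that size. Both are routine, but worth noting so the bijection is airtight.
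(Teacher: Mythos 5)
Your proposal is correct and is essentially the paper's own argument: the paper proves this lemma by exactly the same graft-a-$k$-protected-rooted-tree-onto-a-marked-leaf decomposition, invoking the product formula, and you have simply spelled out the bijection and its inverse (plus the $l(0)=1$ boundary case) in more detail. No gap.
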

\begin{proof}
Let $0<m \leq n$ We can count the number of $k$-protected vertices on a tree with $n$ vertices by choosing a tree on $n-m+1$ vertices, removing a specific leaf, then replacing that leaf with a tree whose root is $k$-protected. We can choose that leaf in one of $l(n-m)$ ways and the tree in $r_k(m)$ ways, so the result follows by the product formula.
\end{proof}
To apply this, of course, we need $L(x)$. Let $T(x, y)= \sum_{n=0}^\infty t_{n, m}x^n y^m$ where $t_{n, m}$ is the number of trees on $n$ vertices with $m$ leaves. We have that $T(x, y)$ satisfies the functional equation $T(x, y)= xy+x\left(\frac{T(x, y)}{1-T(x, y)} \right )$, and solving for $T(x, y)$, we have $T(x, y)=\frac 1 2 (1 - x + x y - \sqrt{-4 x y + (-1 + x - x y)^2})$. The function we want, $L(x)$, will be equal to $\left. \frac{ \partial}{\partial y} T(x, y) \right |_{y=1}$. Thus 
\begin{equation}
L(x)=\frac 1 2 \left(1 + \frac{1}{\sqrt{1 - 4 x}} \right).
\end{equation}
To find expressions for $R_k(x)$, we first observe that since a tree with a 1-protected root is simply a non-empty sequence of trees, we have $R_1(x)=\frac{xT(x)}{1-T(x)}$ where $T(x)=\frac {1- \sqrt{1-4x}}{2}$ giving 
$R_1(x)=\frac{1-2x-\sqrt{1-4x}}2.$
Further, this can be iterated since a root is $k$-protected if and only if it is a non-empty sequence of trees whose roots are $(k-1)$-protected. Thus we have the recursion $R_k(x)=x \cdot \frac{R_{k-1}(x)}{1-R_{k-1}(x)}$.

\begin{theorem}
For all $k \geq 2$,
\begin{equation}
R_k(x)= \frac{x^{k-2}\left(n_k(x)-\sqrt{1-4x} \right)}{2d_k(x)},
\end{equation}
where $n_k(x)$ and $d_k(x)$ are polynomials defined as follows: for all $k \geq 2$, $n_k(x)= 1 - 2x -2x^2 - ... - 2x^k,$
$d_2=2+x$, and for all $k \geq 3$
\begin{equation}
d_k(x)=\sum_{i=0}^{k-3} (i+1)x^i  +\sum_{i=k-2}^{2k-3} (2k+2-i)x^i.
\end{equation}
\end{theorem}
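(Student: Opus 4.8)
The plan is to induct on $k$, driven entirely by the recursion $R_k(x)=x\,R_{k-1}(x)/(1-R_{k-1}(x))$ recorded just above the theorem.

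For the base case $k=2$ I would substitute $R_1(x)=\tfrac12\bigl(1-2x-\sqrt{1-4x}\bigr)$ into the recursion. Since $1-R_1(x)=\tfrac12\bigl(1+2x+\sqrt{1-4x}\bigr)$, rationalizing $R_1/(1-R_1)$ by multiplying top and bottom by $1+2x-\sqrt{1-4x}$ turns the denominator into $(1+2x)^2-(1-4x)=4x(2+x)$, and the numerator simplifies to $2\bigl(1-2x-2x^2-\sqrt{1-4x}\bigr)$; hence $R_2(x)=\bigl(1-2x-2x^2-\sqrt{1-4x}\bigr)\big/\bigl(2(2+x)\bigr)$, which is the claimed formula with $n_2(x)=1-2x-2x^2$ and $d_2(x)=2+x$.

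For the inductive step, assume $R_{k-1}(x)=x^{k-3}\bigl(n_{k-1}-\sqrt{1-4x}\bigr)\big/\bigl(2d_{k-1}\bigr)$. Substituting into the recursion and clearing the inner fraction gives
\[
R_k(x)\;=\;x\cdot\frac{x^{k-3}\bigl(n_{k-1}-\sqrt{1-4x}\bigr)}{\bigl(2d_{k-1}-x^{k-3}n_{k-1}\bigr)+x^{k-3}\sqrt{1-4x}},
\]
and I would rationalize again, against $\bigl(2d_{k-1}-x^{k-3}n_{k-1}\bigr)-x^{k-3}\sqrt{1-4x}$, so the denominator becomes the polynomial $\bigl(2d_{k-1}-x^{k-3}n_{k-1}\bigr)^2-x^{2k-6}(1-4x)$. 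All of this takes place in the quadratic extension $\mathbb{Q}(x)\bigl[\sqrt{1-4x}\bigr]$ — legitimate because $1-4x$ is not a square in $\mathbb{Q}(x)$, so every element is uniquely $a+b\sqrt{1-4x}$ with $a,b\in\mathbb{Q}(x)$ — which means it suffices to compare rational and irrational parts. The crucial point is that the coefficient of $x^{k-3}\sqrt{1-4x}$ in the rationalized numerator collapses to $2d_{k-1}$, via the trivial cancellation $n_{k-1}x^{k-3}+\bigl(2d_{k-1}-x^{k-3}n_{k-1}\bigr)=2d_{k-1}$. Comparing the outcome with the target $x^{k-2}\bigl(n_k-\sqrt{1-4x}\bigr)\big/(2d_k)$, the irrational part yields $4d_kd_{k-1}=\bigl(2d_{k-1}-x^{k-3}n_{k-1}\bigr)^2-x^{2k-6}(1-4x)$, and the rational part then (after using $n_k=n_{k-1}-2x^k$) yields $n_{k-1}^2=1-4x+4x^3d_{k-1}$. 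So the theorem reduces to these two polynomial identities.

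Finally I would establish the two identities from the explicit formulas. A useful simplification: once the family $n_j^2=1-4x+4x^3d_j$ (all $j\ge2$) is known, the first identity is equivalent to the far cleaner recurrence $d_k=d_{k-1}-x^{k-3}n_{k-1}+x^{2k-3}$, so it suffices to check that recurrence and the squaring identity directly — routine, e.g.\ using the compact forms $n_k=(1-3x+2x^{k+1})/(1-x)$ and $d_k=(1+x^{k-2}-3x^{k-1}+x^{2k-1})/(1-x)^2$, which one sees reproduce the stated coefficient expressions by expanding $1/(1-x)^2=\sum_{j\ge0}(j+1)x^j$ and truncating. I expect the only real obstacle to be keeping the inductive-step algebra under control: the rationalized numerator is bulky, and everything telescopes only because of the collapse of its $\sqrt{1-4x}$-coefficient to $2d_{k-1}$, with the identity $n_{k-1}^2=1-4x+4x^3d_{k-1}$ then absorbing the squared radical cleanly. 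The spots to watch are the exponent bookkeeping ($x^{k-3}$ versus $x^{k-2}$ versus $x^{2k-6}$ and $x^{2k-3}$) and the boundary case $k=3$, where $d_{k-1}=d_2$ must be read from its own definition rather than the general formula.
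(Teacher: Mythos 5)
Your proposal is correct and follows essentially the same route as the paper: induction on $k$ via the recursion $R_k(x)=xR_{k-1}(x)/(1-R_{k-1}(x))$, rationalizing against the conjugate, with the two polynomial identities you isolate being exactly the paper's Lemma 2 (the recurrence $d_{k+1}(x)=d_k(x)-x^{k-2}n_k(x)+x^{2k-1}$, in your shifted indexing) and Lemma 3 ($n_k^2(x)-(1-4x)=4x^3d_k(x)$). The only divergence is that the paper verifies these two identities by direct coefficient bookkeeping rather than via your compact closed forms $n_k=(1-3x+2x^{k+1})/(1-x)$ and $d_k=(1+x^{k-2}-3x^{k-1}+x^{2k-1})/(1-x)^2$, both of which check out.
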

For some numerical justification, this gives the series expansions $R_2(x) = x^3 + 2x^4+6x^5+18x^6+...$ and $R_3(x) = x^4+2x^5+6x^6+...$, each of which are accurate up to trees of size 6 by examination.
To prove this theorem we will need some purely computational lemmas.

\begin{lemma} For all $k \geq 2$,
\begin{equation}
d_{k+1}(x)=d_k(x)-x^{k-2}n_k(x)+x^{2k-1}.
\end{equation}
\end{lemma}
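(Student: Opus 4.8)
The plan is to prove the identity by comparing, for each exponent $i$, the coefficient of $x^i$ on the two sides, working directly from the explicit closed forms for $n_k(x)$ and $d_k(x)$ recorded in the theorem above. Everything beyond elementary arithmetic is bookkeeping about where the piecewise data change: $d_k(x)$ passes from its ``ascending'' part to its ``descending'' part at $i=k-2$, $d_{k+1}(x)$ passes at $i=k-1$, and the shifted polynomial $x^{k-2}n_k(x)$ is supported on $k-2\le i\le 2k-2$, contributing the exceptional coefficient $1$ at $i=k-2$ and the constant $-2$ for $k-1\le i\le 2k-2$. These breakpoints partition $\{0,1,\dots,2k-1\}$ into the blocks $[0,k-3]$, $\{k-2\}$, $[k-1,2k-3]$, $\{2k-2\}$, and $\{2k-1\}$, and I would verify the coefficient identity on each block in turn.

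First I would dispose of the base case $k=2$ on its own, since $d_2(x)=2+x$ is not an instance of the general formula: there the asserted identity reads $d_3(x)=(2+x)-(1-2x-2x^2)+x^3$, which one checks against the formula for $d_3(x)$. For $k\ge 3$ both $d_k$ and $d_{k+1}$ are given by the displayed formula, and the verification runs block by block. On $[0,k-3]$ both $d_k$ and $d_{k+1}$ contribute $i+1$ while $x^{k-2}n_k(x)$ contributes nothing, so both sides equal $i+1$. At $i=k-2$ the polynomial $d_{k+1}$ contributes its (final) ascending value, $d_k$ contributes its (first) descending value, and $x^{k-2}n_k(x)$ contributes $1$; here the descending value of $d_k$ exceeds the ascending value of $d_{k+1}$ by exactly $1$, which is precisely cancelled by that coefficient $1$. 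On $[k-1,2k-3]$ all three quantities are active, $x^{2k-1}$ is not, and subtracting the $-2$ coming from $n_k$ from the descending value of $d_k$ reproduces the descending value of $d_{k+1}$. Finally, at $i=2k-2$ only $d_{k+1}$ and the top term $-2x^{2k-2}$ of $x^{k-2}n_k(x)$ survive, giving $0-(-2)=2$, the matching coefficient of $d_{k+1}$; and at $i=2k-1$ only the leading term $x^{2k-1}$ of $d_{k+1}$ and the added $x^{2k-1}$ survive, both equal to $1$. Coefficients outside $[0,2k-1]$ vanish on both sides.

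I expect the only real obstacle to be clerical accuracy: correctly locating the five breakpoints and the exact support of the shifted polynomial $x^{k-2}n_k(x)$ without an off-by-one slip, the key subtlety being that $n_k(x)$ has constant term $1$ but coefficients $-2$ in degrees $1,\dots,k$, which is exactly what makes the $i=k-2$ and $i=2k-2$ boundary cases balance. One must also remember that $d_2(x)$ lies outside the general formula and so demands separate treatment. There is no structural difficulty; the lemma is a finite polynomial identity, and with the blocks set up each case collapses to a one-line computation.
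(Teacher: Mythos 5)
Your proposal is correct and is essentially the paper's own argument: although the paper phrases its proof as an induction on $k$, the inductive hypothesis is never actually used, and what is done is exactly your direct comparison of the two closed forms, with the case $k=2$ (where $d_2(x)=2+x$ falls outside the general formula) checked separately. One caveat worth recording: the breakpoint values you rely on (the excess of $1$ at $i=k-2$, the coefficient $2$ at $i=2k-2$, and the leading coefficient $1$ at $i=2k-1$) are those of a descending coefficient $2k-2-i$, whereas the paper's displayed formula for $d_k(x)$ reads $2k+2-i$; the displayed version is a typo --- it contradicts $d_3(x)=1+3x+2x^2+x^3$ as used in the base case, the recursion being proved, and the identity $n_k^2(x)-(1-4x)=4x^3d_k(x)$ --- so your coefficients are the correct ones, and with the formula exactly as printed the lemma would in fact fail at those three exponents.
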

\begin{proof} We will proceed by induction on $k$.

For the base case, if $k=2$, then 
$$d_3(x)=1+3x+2x^2+x^3=2+x-(1-2x-2x^2)+x^3=d_2(x)-x^{2-2}n_2(x)+x^{2(2)-1}.$$

For the induction step, we assume that the statement holds for $d_k(x)$, so from (2.2),

\begin{align*}
d_{k+1}(x)&=\sum_{i=0}^{(k+1)-3} (i+1)x^i  +\sum_{i=k-1}^{2(k+1)-3} (2(k+1)+2-i)x^i \\
&=\sum_{i=0}^{k-3} (i+1)x^i  +\sum_{i=k-2}^{2k-3} (2k+2-i)x^i -x^{k-2}(1-2\sum_{i=1}^k x^i)+x^{2k-1} \\
&=d_k(x)-x^{k-2}n_k(x)+x^{2k-1}.
\end{align*}

\end{proof}
\begin{lemma} If $k \geq 2$, then
\begin{equation} 
n^2_k(x)-(1-4x)=4x^3d_k(x).
\end{equation}
\end{lemma}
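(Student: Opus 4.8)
The plan is to prove this by induction on $k$, letting the previous lemma's recursion $d_{k+1}(x)=d_k(x)-x^{k-2}n_k(x)+x^{2k-1}$ do essentially all of the work. The only fact I will need about the $n_k$ is the obvious recursion $n_{k+1}(x)=n_k(x)-2x^{k+1}$, which is read off directly from $n_k(x)=1-2x-2x^2-\cdots-2x^k$.

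For the base case $k=2$ I would simply expand $n_2(x)^2=(1-2x-2x^2)^2=1-4x+8x^3+4x^4$ and observe that $n_2(x)^2-(1-4x)=8x^3+4x^4=4x^3(2+x)=4x^3d_2(x)$.

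For the inductive step I would assume $n_k(x)^2-(1-4x)=4x^3d_k(x)$ and square the recursion for $n_{k+1}$ to get $n_{k+1}(x)^2=n_k(x)^2-4x^{k+1}n_k(x)+4x^{2k+2}$. Subtracting $1-4x$ and applying the inductive hypothesis yields
\[
n_{k+1}(x)^2-(1-4x)=4x^3d_k(x)-4x^{k+1}n_k(x)+4x^{2k+2},
\]
while multiplying the previous lemma's recursion by $4x^3$ gives exactly
\[
4x^3d_{k+1}(x)=4x^3d_k(x)-4x^{k+1}n_k(x)+4x^{2k+2}.
\]
These two expressions coincide, which completes the induction.

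I do not expect a genuine obstacle here: the statement is purely computational, and its content has already been packaged into the previous lemma. The one thing to watch is the bookkeeping of exponents — the factors $x^{k-2}$ and $x^{2k-1}$ appearing in the $d_{k+1}$-recursion become $x^{k+1}$ and $x^{2k+2}$ after multiplication by $4x^3$, matching precisely the terms produced by $(n_k(x)-2x^{k+1})^2$. As an alternative to induction one could telescope: $n_k(x)^2-n_2(x)^2=\sum_{j=3}^k\bigl(n_j(x)^2-n_{j-1}(x)^2\bigr)$ with $n_j(x)^2-n_{j-1}(x)^2=-2x^j\bigl(2n_j(x)+2x^j\bigr)$, then compare against the analogous telescoping of $4x^3d_k(x)$ supplied by the previous lemma; but the induction is tidier.
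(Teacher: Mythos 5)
Your proof is correct, but it takes a genuinely different route from the paper's. The paper proves the identity directly: it expands $n_k^2(x)$ from scratch, counting for each power $x^i$ how many cross-terms of each sign contribute (the $1-4x$ head, the net coefficient $4(i-2)$ for $3\le i\le k$, and the tail coefficients for $k<i\le 2k$), and then matches the result term-by-term against the closed-form definition of $d_k(x)$ in (2.2). You instead run an induction that touches $d_k$ only through the recursion $d_{k+1}=d_k-x^{k-2}n_k+x^{2k-1}$ of the previous lemma, together with the trivial recursion $n_{k+1}=n_k-2x^{k+1}$ and the base case $d_2=2+x$; the exponent bookkeeping you flag ($4x^3\cdot x^{k-2}=4x^{k+1}$ and $4x^3\cdot x^{2k-1}=4x^{2k+2}$) checks out, and the base-case expansion $n_2^2=1-4x+8x^3+4x^4$ is right. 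What your approach buys is brevity and robustness: it avoids the delicate coefficient-counting entirely and reuses work already done. What it costs is independence: your argument establishes the identity for the sequence of polynomials determined by $d_2$ and the recursion, and hence leans on the previous lemma being correct, whereas the paper's direct expansion serves as a separate verification of the explicit formula for $d_k(x)$. Both are legitimate proofs of the stated lemma.
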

\begin{proof} The expansion of $n^2_k(x)$ splits nicely into 3 parts as follows:

The first two terms will be $1-4x$ for all $k \geq 1$.

For all $1 < i \leq k$ we will have a two copies of $-2x^i$ and $i-1$ copies of $4x^i$ giving a net total of $(i-2)x^i$. This means there will be no $x^2$ term.

For all $ k< i \leq 2k$ we will have no negative terms, and for each term we will have $2k-1-i$ copies of $4x^i$ giving us

$$n^2_k(x)=1-4x+\sum_{i=3}^k (i-2)4x^i+ \sum_{i=k+1}^{2k} (2k-1-i)4x^i $$
$$=1-4x +4x^3 \left( \sum_{i=0}^{k-3} (i+1)x^i + \sum_{i=k-2}^{2k-3} (2k+2-i)x^i \right).$$
\end{proof}

\begin{proof}[of Theorem 1]
We once again proceed by induction on $k$.

For the base case, if $k=2$, then, after clearing denominators and multiplying by the conjugate of the denominator,
$$R_2(x)=\frac{x \cdot R_1(x)}{1-{R_1(x)}}=\frac{2x\left(1-2x-2x^2 -\sqrt{1-4x}\right)}{4x(2+x)}= \frac{x^0\left(n_2(x)-\sqrt{1-4x}\right)}{2d_2(x)}.$$
Now, assuming that the statement holds for $R_k(x)$, and after clearing denominators and multiplying by the conjugate of the denominator,
$$R_{k+1}(x)=\frac{x \cdot R_k}{1-R_k}$$
$$=\frac{x^{k-1} \left( 2d_k(x) n_k(x)-x^{k-2}n^2_k(x)-2d_k(x)\sqrt{1-4x}+x^{n-2}(1+4x)\right)}{4d^2_k(x)-4x^{k-2}d_k(x)n_k(x)+x^{2k-4}n^2_k(x)-x^{2k-4}(1-4x)} $$
$$=\frac{x^{k-1} \left( 2d_k(x) n_k(x)-x^{k-2}(4x^3d_k(x))-2d_k(x)\sqrt{1-4x}\right)}{4d^2_k(x)-4x^{k-2}d_k(x)n_k(x)+x^{2k-4}(4x^3d_k(x))}$$
$$=\frac{x^{k-1} \left( 2 n_k(x)-4x^{k+1}-2\sqrt{1-4x}\right)}{4d_k(x)-4x^{k-2}n_k(x)+4x^{2k-1}}=\frac{x^{k-1} \left(n_{k+1}(x)-\sqrt{1-4x}\right)}{2d_{k+1}(x)}.$$
\end{proof}

\section{Asymptotics}
We will say that $a_n \sim b_n$ if $\displaystyle{\lim_{n \rightarrow \infty} \frac {a_n} {b_n} =1}$. 

\begin{theorem}[Bender's Lemma \cite{bender}]
Suppose that $A(z)=\sum a_n z^n$ and $B(z)=\sum b_n z^n$ are power series with radii of convergence $\alpha> \beta \geq 0$, respectively. Suppose $b_{n-1}/b_n$ approaches a limit $b$ as $n$ approaches infinity. If $A(b) \neq 0$, then $c_n \sim A(b)b_n$, where $\sum c_nz^n=A(z)B(z)$.
\end{theorem}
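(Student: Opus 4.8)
The plan is to read off the coefficients of the Cauchy product $C(z)=A(z)B(z)$ and reduce the asymptotic claim to a term-by-term passage to the limit. First I would extract from the hypothesis the one analytic fact I need: since $b_{n-1}/b_n\to b$, the ratio test pins down the radius of convergence of $B$, namely $\beta=|b|$; in particular $|b|<\alpha$, so $A$ converges absolutely at $z=b$ and $A(b)$ is a genuine number. Note also that $b_n\neq 0$ for every $n\geq 1$, since otherwise the ratio $b_{n-1}/b_n$ would be undefined. Now the Cauchy product gives $c_n=\sum_{k=0}^{n}a_k b_{n-k}$, so
\[
\frac{c_n}{b_n}=\sum_{k=0}^{n}a_k\,\frac{b_{n-k}}{b_n},
\]
and the theorem is precisely the assertion that this sum tends to $\sum_{k\geq 0}a_k b^k=A(b)$ as $n\to\infty$.

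To justify passing to the limit inside the sum I would establish two things. For the pointwise limit, telescope each ratio: $b_{n-k}/b_n=\prod_{j=n-k+1}^{n}(b_{j-1}/b_j)$ is a product of $k$ factors, each converging to $b$, so for every fixed $k$ one has $b_{n-k}/b_n\to b^k$ as $n\to\infty$. For the domination, fix $\rho$ with $|b|<\rho<\alpha$. Since $|b_{j-1}/b_j|\to |b|<\rho$, all but finitely many of these ratios have modulus at most $\rho$, and the finitely many exceptional ones are uniformly bounded; multiplying out the telescoping product and absorbing the bounded initial block into a constant yields a bound $|b_{n-k}/b_n|\le C\rho^{\,k}$ valid for all $n$ and all $0\le k\le n$, with $C$ depending only on the sequence $(b_n)$ and on $\rho$. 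Because $\rho<\alpha$, the series $\sum_k|a_k|\rho^{\,k}$ converges, so $|a_k\,b_{n-k}/b_n|\le C|a_k|\rho^{\,k}$ is a summable majorant independent of $n$.

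With the pointwise limits and the uniform summable bound in hand, Tannery's theorem (the dominated-convergence theorem for series) lets me pass to the limit term by term, giving $c_n/b_n=\sum_{k=0}^{n}a_k(b_{n-k}/b_n)\to\sum_{k\geq 0}a_k b^k=A(b)$. Since $A(b)\neq 0$ by hypothesis, dividing yields $c_n/(A(b)\,b_n)\to 1$, that is, $c_n\sim A(b)\,b_n$.

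The main obstacle is the uniform geometric domination $|b_{n-k}/b_n|\le C\rho^{\,k}$. For $k\le n-N$, every factor of the telescoped product already has index exceeding the threshold $N$ beyond which $|b_{j-1}/b_j|\le\rho$, and the bound $\rho^{\,k}$ is immediate; the delicate range is $k$ close to $n$, where the product reaches back into the initial segment of the sequence and a bounded number of "large" factors appear. These have to be swallowed into the constant $C$, which forces a little bookkeeping — using $k\le n$ to cap the number of bad factors by $N$, and allowing a factor such as $\rho^{-N}$ to cover the case $\rho<1$. Everything else — the ratio-test identification of $\beta$, the telescoped pointwise limits, and the final division — is routine.
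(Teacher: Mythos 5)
The paper does not prove this statement at all: it is quoted as a known result from Bender's survey (hence the citation in the theorem head) and used as a black box, so there is no in-paper argument to compare yours against. Judged on its own, your proof is correct, and it is in fact the standard proof of this theorem: write $c_n/b_n=\sum_{k=0}^{n}a_k\,(b_{n-k}/b_n)$, telescope $b_{n-k}/b_n$ into a product of $k$ consecutive ratios to get the pointwise limit $b^k$, establish the uniform geometric domination $|b_{n-k}/b_n|\le C\rho^k$ for some $|b|<\rho<\alpha$, and pass to the limit by Tannery's theorem. The two places where such an argument could go wrong are both handled: the identification $\beta=|b|$ via the ratio test is what converts the hypothesis $\alpha>\beta$ into the needed inequality $|b|<\alpha$ (so that $A(b)$ converges absolutely and $\sum_k|a_k|\rho^k<\infty$), and your bookkeeping for the at most $N$ exceptional ratios when $k$ is close to $n$ --- absorbing a bounded block and a factor $\max(1,\rho^{-N})$ into the constant $C$ --- is exactly what is required to make the majorant uniform in $n$. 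I see no gap.
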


\begin{lemma}
For all $k \geq 1$, $d_k(x)$ is never zero on the closed disk of radius $\frac 4 {15}$. 
\end{lemma}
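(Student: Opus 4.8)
The plan is to replace $d_k(x)$ by a transparent rational closed form and then read off the non-vanishing by an elementary modulus estimate. Starting from the identity $n_k(x)^2-(1-4x)=4x^3d_k(x)$ established above, we have $d_k(x)=(n_k(x)^2-(1-4x))/(4x^3)$. Summing the geometric series in $n_k(x)=1-2x-2x^2-\cdots-2x^k$ gives the rational expression $n_k(x)=(1-3x+2x^{k+1})/(1-x)$. Substituting this, clearing the denominator $(1-x)^2$, and using the identity $(1-4x)(1-x)^2=(1-3x)^2-4x^3$, the $(1-3x)^2$ terms cancel and a common factor $4x^3$ drops out, leaving
\[
d_k(x)=\frac{1+x^{k-2}(1-3x)+x^{2k-1}}{(1-x)^2}\qquad (k\ge 2).
\]
As a check this yields $d_2(x)=(2-3x+x^3)/(1-x)^2=x+2$ and $d_3(x)=1+3x+2x^2+x^3$, matching the earlier computations.

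Next I would reduce to the numerator. On the closed disk $|x|\le \frac{4}{15}$ one has $|x|<1$, so $(1-x)^{-2}$ is finite and nonzero there; hence $d_k(x)$ has a zero on this disk precisely when the polynomial $p_k(x):=1+x^{k-2}(1-3x)+x^{2k-1}$ does, and it suffices to keep $|p_k(x)|$ away from $0$. Writing $\rho=\frac{4}{15}$, I split on $k$. For $k\ge 3$ the exponent $k-2$ is positive, so $p_k(0)=1$ and, for $|x|\le\rho$,
\[
|p_k(x)-1|\le \rho^{k-2}(1+3\rho)+\rho^{2k-1}\le \rho(1+3\rho)+\rho^{5}=\frac{12}{25}+\rho^{5}<1,
\]
whence $|p_k(x)|\ge 1-|p_k(x)-1|>0$. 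For $k=2$ one instead uses $p_2(x)=2-3x+x^3$, so $|p_2(x)|\ge 2-3\rho-\rho^{3}=\frac{6}{5}-\rho^{3}>1$. The case $k=1$ is trivial, $d_1$ being a nonzero constant. This covers all $k\ge 1$.

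The heart of the matter is the first step. A direct triangle-inequality attack on the explicit polynomial $d_k(x)=\sum_{i=0}^{k-3}(i+1)x^i+\sum_{i=k-2}^{2k-3}(2k-2-i)x^i$ does not work, since its coefficients grow linearly in $k$ (the coefficient of $x^{k-2}$ equals $k$) and $\sum_{i\ge 1}(i+2)\rho^{i}>1$ already at $\rho=\frac{4}{15}$, so the constant term cannot dominate the tail. Passing to the rational form is exactly what rescues the argument: it absorbs all of the growth of $d_k$ into the harmless factor $(1-x)^{-2}$ and isolates the genuine perturbation $x^{k-2}(1-3x)+x^{2k-1}$, which is uniformly small on the disk. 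After that only bookkeeping remains, with the mild wrinkle that $k=2$ needs its own grouping, since there $x^{k-2}$ is a constant and contributes to the constant term of $p_k$.
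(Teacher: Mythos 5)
Your proof is correct, but it takes a genuinely different route from the paper's. The paper works directly with the polynomial form of $d_k(x)$: after checking $k=1,2,3$ by hand, it bounds $|d_k(x)-1|$ by the dominating series $|x|^{k-2}+\sum_{i\ge 1}(i+1)|x|^{i}$ and notes that for $k\ge 4$ and $|x|\le \frac{4}{15}$ this is less than $1$, so $|d_k(x)|\ge 1-|d_k(x)-1|>0$. You instead combine the identity $n_k^2(x)-(1-4x)=4x^3d_k(x)$ with the geometric-series form $n_k(x)=(1-3x+2x^{k+1})/(1-x)$ to obtain the closed form $d_k(x)=\bigl(1+x^{k-2}(1-3x)+x^{2k-1}\bigr)/(1-x)^2$, whose numerator has bounded coefficients; your estimate then handles every $k\ge 3$ uniformly (no separate $k=3$ computation, which in the paper comes out to the rather tight margin $131/3375$), and it in fact establishes non-vanishing on a noticeably larger disk. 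That is a cleaner and more robust argument, and your closed form is independently useful. One quibble: your closing claim that a direct triangle-inequality attack on the explicit polynomial ``does not work'' is an overstatement. It is true that the crude uniform bound with coefficient $i+2$ at every index exceeds $1$ at $\rho=\frac{4}{15}$, but only the single coefficient at $i=k-2$ of $d_k(x)-1$ attains $i+2$; all others are at most $i+1$, so the refined dominating series $x^{k-2}+\sum_{i\ge1}(i+1)x^i$ makes the direct attack go through for $k\ge 4$ (with small cases checked separately) --- and that is precisely the paper's proof.
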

\begin{proof}
Let $|x| \leq \frac 4 {15}$. Then $|d_1(x)|=1 > 0$, $|d_2(x)| \geq 2 - |x| >0$, and
$$|d_3(x)| \geq 1 - |3x| - |2x^2| - |x^3| \geq 1 - \frac 4 5 - \frac {32}{225} - \frac {192}{3375}=\frac{131}{3375}.$$
Observe that, for $k \geq 2$, $|d_k(x)-1| <\left |x^{k-2} + \sum_{k=1}^\infty (k+1)x^k \right |$. Now we have that 
$$|d_k(x)-1| <\left |x^{k-2}+\sum_{k=1}^\infty (k+1)x^k \right| $$
$$\leq \left ( \frac{4}{15} \right)^{k-2} + \sum_{k=1}^{\infty} (k+1)\left( \frac{4}{15} \right)^k =\left( \frac{4}{15} \right)^{k-2}+\frac{165}{196}.$$
If $k \geq 4$, then $\left( \frac{4}{15} \right)^{k-2} < \frac {31}{196}$, so that $|d_k(x)-1| <1$. Hence \linebreak $|d_k(x)| \geq 1 - |d_k(x)-1| >0$ for all $k \geq 4.$
\end{proof}
\begin{theorem}
Let $t_k(n)$ be the number of vertices which are $k$-protected in all rooted plane trees of size of $n$. Then $t_k(n) \sim \frac{3{2n -2\choose n-1}}{(4^k+2)}$.
\end{theorem}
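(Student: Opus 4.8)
The plan is to apply Bender's Lemma (Theorem 3.1) to the product decomposition $T_k(x) = L(x) \cdot R_k(x)$ from Lemma 2.1. The natural choice is $B(z) = R_k(x)$ and $A(z) = L(x)$, but we must check the hypotheses carefully. From Theorem 1 we have $R_k(x) = \frac{x^{k-2}(n_k(x) - \sqrt{1-4x})}{2d_k(x)}$; since $d_k$ is nonzero on the closed disk of radius $\frac{4}{15} > \frac14$ by Lemma 3.3, the only singularity of $R_k$ on its circle of convergence is the branch point at $x = \frac14$ coming from $\sqrt{1-4x}$, so $R_k$ has radius of convergence $\beta = \frac14$. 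Meanwhile $L(x) = \frac12\left(1 + \frac{1}{\sqrt{1-4x}}\right)$ also has radius of convergence exactly $\frac14$, so we cannot directly take $A = L$, $B = R_k$ because Bender's Lemma requires $\alpha > \beta$.

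To get around this, I would instead write $T_k(x) = R_k(x) \cdot L(x)$ and extract the singular part. Observe that $L(x) = \frac12 + \frac12 V(x)/x$ where $V(x) = \frac{x}{\sqrt{1-4x}}$; more to the point, the coefficient asymptotics of $L$ are governed by $\frac{1}{2\sqrt{1-4x}}$, whose coefficients are $\frac12\binom{2n}{n} \sim \frac{4^n}{2\sqrt{\pi n}}$. The cleaner route: split off the algebraic (non-singular on $|x|\le\frac14$... no) — actually the right move is to factor out the $\sqrt{1-4x}$ singularity from $R_k$. Write $R_k(x) = \frac{x^{k-2}n_k(x)}{2d_k(x)} - \frac{x^{k-2}}{2d_k(x)}\sqrt{1-4x}$. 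The first term is analytic on a disk of radius $\frac{4}{15}$, contributing exponentially smaller coefficients than $4^n$, so it is negligible. Thus $r_k(n) \sim [x^n]\left(-\frac{x^{k-2}}{2d_k(x)}\sqrt{1-4x}\right)$, and here $-\frac{x^{k-2}}{2d_k(x)}$ is a power series $\widetilde{A}(x)$ with radius of convergence $\frac{4}{15} > \frac14 = $ radius of $\sqrt{1-4x}$. Now Bender's Lemma applies with $B(z) = \sqrt{1-4x}$ (whose coefficients $b_n = -\frac{1}{2n-1}\binom{2n}{n}$ satisfy $b_{n-1}/b_n \to \frac14$) and $A(z) = \widetilde{A}(x)$, giving $r_k(n) \sim \widetilde{A}(\tfrac14)\, b_n = -\frac{(1/4)^{k-2}}{2d_k(1/4)}\cdot\left(-\frac{1}{2n-1}\binom{2n}{n}\right)$.

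For the final step I would apply Bender's Lemma once more to $T_k(x) = L(x)R_k(x)$, now taking $B(z) = R_k(x)$ — but this has the same radius as $L$, so again I instead use the singular decomposition: $T_k(x) = L(x) R_k(x)$, and since $r_k(n) \sim C_k \cdot \left(-\frac{1}{2n-1}\binom{2n}{n}\right)$ with $C_k = \frac{(1/4)^{k-2}}{2 d_k(1/4)}$, and $L(x)$ contributes a factor whose coefficient asymptotics are $\sim \frac12\binom{2n}{n}$, I would combine these via a convolution/singularity-analysis argument: the dominant singularity of $T_k$ is at $x=\frac14$ of type $(1-4x)^{-1/2}\cdot(1-4x)^{1/2}$-mixed, but cleanly, apply Bender with $A = L$ and $B = $ the singular part of $R_k$. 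This yields $t_k(n) \sim L(\tfrac14)\cdot(\text{singular coefficient of }R_k)$? No — $L(\tfrac14)$ diverges. The correct bookkeeping: take $A(z) = -\frac{x^{k-2}}{2d_k(x)}$ (radius $\frac{4}{15}$) and $B(z) = L(x)\sqrt{1-4x} = \frac12\sqrt{1-4x} + \frac12$, whose coefficients for $n\ge1$ are those of $\frac12\sqrt{1-4x}$, namely $-\frac{1}{2(2n-1)}\binom{2n}{n}$, with ratio $\to\frac14$. Then $t_k(n) \sim A(\tfrac14) b_n = -\frac{(1/4)^{k-2}}{2d_k(1/4)}\cdot\left(-\frac{1}{2(2n-1)}\binom{2n}{n}\right)$. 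Finally I would simplify: $\binom{2n}{n}/(2n-1) \sim \frac{1}{2}\cdot\frac{1}{n}\cdot\frac14\binom{2n-2}{n-1}\cdot 4 \cdots$ — more directly, use $\binom{2n}{n} = \frac{2(2n-1)}{n}\binom{2n-2}{n-1}$, so $\frac{\binom{2n}{n}}{2n-1} = \frac{2}{n}\binom{2n-2}{n-1}$, hence $t_k(n) \sim \frac{(1/4)^{k-2}}{4 d_k(1/4)}\cdot\frac{2}{n}\binom{2n-2}{n-1}$, and it remains to compute $d_k(1/4)$ in closed form.

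The main obstacle is evaluating $d_k(1/4)$ and checking it produces exactly $\frac{4^k+2}{3}\cdot\frac{(1/4)^{k-2}}{\text{stuff}}$ to match the claimed constant. Summing the geometric-type series in (2.2) at $x=\frac14$: $d_k(1/4) = \sum_{i=0}^{k-3}(i+1)4^{-i} + \sum_{i=k-2}^{2k-3}(2k+2-i)4^{-i}$. Both pieces are finite sums of the form $\sum (ai+b)4^{-i}$, summable in closed form; the answer should collapse to something like $d_k(1/4) = c\cdot 4^{-(2k-3)}(4^k + 2) + (\text{lower order})$ or exactly a clean multiple of $4^k+2$ after clearing powers of $4$. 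I would carry out this summation, substitute into $t_k(n) \sim \frac{(1/4)^{k-2}}{4 d_k(1/4)}\cdot\frac{2}{n}\binom{2n-2}{n-1}$, and verify the arithmetic yields precisely $\frac{3\binom{2n-2}{n-1}}{4^k+2}$. A sanity check against the known $k=2$ case (Cheon–Shapiro, $d_2 = 2 + x$, so $d_2(1/4) = 9/4$, giving constant $\frac{(1/4)^0}{4\cdot 9/4}\cdot 2 = \frac{2}{9} = \frac{3}{18} = \frac{3}{4^2+2}$) confirms the formula and the method.
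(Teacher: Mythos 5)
There is a genuine error in the final step: you keep the asymptotically negligible piece of $T_k(x)$ and discard the dominant one. Write $R_k(x)=P(x)+Q(x)\sqrt{1-4x}$ with $P(x)=\frac{x^{k-2}n_k(x)}{2d_k(x)}$ and $Q(x)=-\frac{x^{k-2}}{2d_k(x)}$, both analytic on the disk of radius $4/15$. For $R_k$ alone your reasoning is fine, because $P$ contributes exponentially small coefficients. But after multiplying by $L(x)=\frac12\bigl(1+\frac{1}{\sqrt{1-4x}}\bigr)$ the roles reverse: $L(x)Q(x)\sqrt{1-4x}=\frac12 Q(x)+\frac12 Q(x)\sqrt{1-4x}$ is exactly the term you keep, and its coefficients are of order $4^n n^{-3/2}$ (the coefficients of $\sqrt{1-4x}$ are $-\frac{2}{n}\binom{2n-2}{n-1}$), which is smaller by a factor of $n$ than the claimed answer $\frac{3}{4^k+2}\binom{2n-2}{n-1}\asymp 4^n n^{-1/2}$. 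Meanwhile the term you discard as ``negligible,'' $L(x)P(x)=\frac12 P(x)+\frac{P(x)}{2\sqrt{1-4x}}$, carries the $(1-4x)^{-1/2}$ singularity and is the actual source of the asymptotics. Your own formula betrays this: you arrive at $t_k(n)\sim\frac{(1/4)^{k-2}}{4d_k(1/4)}\cdot\frac{2}{n}\binom{2n-2}{n-1}$, which has a stray $\frac{2}{n}$ and therefore cannot equal $\frac{3}{4^k+2}\binom{2n-2}{n-1}$ for any value of $d_k(1/4)$. The sanity check at $k=2$ only appears to work because of an arithmetic slip ($\frac{2}{9}\neq\frac{3}{18}$) combined with silently dropping the $\frac{2}{n}$.

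The fix is what the paper does: expand the product completely before deciding what is negligible. Since
\[
L(x)R_k(x)=\frac{x^{k-2}\bigl(n_k(x)-(1-4x)\bigr)}{4d_k(x)\sqrt{1-4x}}+\frac{x^{k-2}\bigl(n_k(x)-1\bigr)}{4d_k(x)},
\]
the second summand is analytic on the disk of radius $4/15$ and genuinely negligible, while the first equals $V(x)\cdot\frac{x^{k-3}(n_k(x)-1+4x)}{4d_k(x)}$ with $V(x)=\frac{x}{\sqrt{1-4x}}$. Bender's Lemma then applies with $B=V$ (coefficients $\binom{2n-2}{n-1}$, ratio tending to $1/4$) and $A=\frac{x^{k-3}(n_k(x)-1+4x)}{4d_k(x)}$, and the evaluation $A(1/4)=\frac{(1/4)^{k-3}n_k(1/4)}{4d_k(1/4)}=\frac{3}{4^k+2}$ follows from $n_k(1/4)=\frac{2+4^k}{3\cdot 4^k}$ and the identity $d_k(1/4)=16\,n_k^2(1/4)$ furnished by Lemma 2.4 --- the evaluation of $d_k(1/4)$ you deferred is handled by that identity rather than by summing the defining series. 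Your instinct to separate the singular and analytic parts of $R_k$ was sound; the missing observation is that multiplication by a function with a $(1-4x)^{-1/2}$ singularity promotes the analytic part of $R_k$ to the dominant role.
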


\begin{proof} Note that $\frac{x}{\sqrt{1-4x}}$ is the generating function for the number of all vertices of all trees of size $n$. We have that 
$$T_k(x=L(x) \cdot R_k(x)=\left(\frac{1+\sqrt{1-4x}}{2\sqrt{1-4x}} \right) \cdot \left (  \frac{x^{k-2}\left( n_k(x) -\sqrt{1-4x}\right)}{2d_k(x)} \right) $$
$$=x^{k-2} \cdot \frac{n_k(x)-(1-4x)+(n_k(x)-1)\sqrt{1-4x}}{4 d_k(x) \sqrt{1-4x}} $$
$$=\frac{x}{\sqrt{1-4x}} \cdot \frac{x^{k-3}(n_k(x)-1+4x)}{4d_k(x)} -\frac{ x^{k-2}(n_k(x)+1)}{4 d_k(x)}$$
Since $d_k(x)$ is non-zero on the closed disk of radius 4/15, it follows that the term on the right is asymptotically irrelevant and that $\frac{x^{k-3}(n_k(x)-1+4x)}{4d_k(x)}$ has a radius of convergence larger than 1/4 for all $k$. Thus we can apply Bender's Lemma to the term on the left for any $k$, giving
\begin{equation}
[x^n] T_k(x) \sim {2n -2\choose n-1} \frac{(1/4)^{k-3}(n_k(1/4)-1+4(1/4))}{4d_k(1/4)}.
\end{equation}
Since we have the recurrence $n_{k+1}(1/4)=n_k(1/4)-2(1/4)^{k+1}$ and $n_1(1/4)=3/4$, we have $n_k(1/4)=\frac{2+4^k}{3 \cdot 4^k}$. Applying Lemma 3 we have
$$n_k^2(1/4)-(1-4(1/4))=4(1/4)^3 d_k(1/4),$$
so that $d_k(1/4)=16 n_k^2(1/4)$.
Thus
$$\frac{(1/4)^{k-3}(n_k(1/4)-1+4(1/4))}{4d_k(1/4)}=\frac{1}{4^{k} \cdot \frac{2+4^k}{3 \cdot 4^k}}=\frac{3}{4^k+2}.$$

\end{proof}

\begin{corollary}
Let $p_k(n)$ be the probability that a random vertex in a random rooted plane tree of size $n$ is $k$-protected. Then $p_k(n) \sim \frac{3}{4^k+2}$.
\end{corollary}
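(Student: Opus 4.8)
The corollary follows almost immediately from Theorem 3 together with the standard facts recalled in the introduction. The plan is to compute the ratio of the asymptotic count of $k$-protected vertices to the total count of vertices over all trees of size $n$, and observe that the binomial factor cancels exactly.

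First I would recall that $p_k(n)$ is, by definition, the expected number of $k$-protected vertices in a random tree of size $n$ divided by the expected number of vertices; equivalently, since the tree is chosen uniformly among the $t(n)$ trees of size $n$, we have
\begin{equation}
p_k(n)=\frac{t_k(n)}{v(n)},
\end{equation}
where $v(n)=\binom{2n-2}{n-1}$ is the total number of vertices summed over all trees of size $n$ (the $(n-1)$st central binomial coefficient, as noted in the introduction). Next I would substitute the asymptotic estimate from Theorem 3, namely $t_k(n)\sim \frac{3}{4^k+2}\binom{2n-2}{n-1}$, into this quotient. Since $v(n)=\binom{2n-2}{n-1}$ exactly (not merely asymptotically), the ratio $t_k(n)/v(n)$ tends to $\frac{3}{4^k+2}$; more precisely, $\lim_{n\to\infty} p_k(n) = \lim_{n\to\infty} \frac{t_k(n)}{\binom{2n-2}{n-1}} = \frac{3}{4^k+2}$ by Theorem 3. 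This establishes $p_k(n)\sim \frac{3}{4^k+2}$ as claimed (here the right-hand side is a nonzero constant, so the asymptotic equivalence is just a statement of the limit).

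There is essentially no obstacle here; the only point requiring a word of care is the identification of $p_k(n)$ with the ratio $t_k(n)/v(n)$ rather than with an average of per-tree ratios. The point is that "a random vertex in a random rooted plane tree of size $n$" is naturally interpreted as first picking a tree with probability proportional to its number of vertices and then a uniform vertex within it — or, equivalently and more simply, picking uniformly among all $v(n)$ vertices appearing in the disjoint union of all size-$n$ trees — under which interpretation the probability of landing on a $k$-protected vertex is exactly $t_k(n)/v(n)$. With this convention the corollary is immediate from Theorem 3.
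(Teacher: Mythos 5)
Your proposal is correct and matches the paper's proof, which simply divides the asymptotic count from Theorem 3 by the total vertex count $\binom{2n-2}{n-1}$. The interpretive point you raise is moot here anyway: since every tree of size $n$ has exactly $n$ vertices, picking a uniform tree and then a uniform vertex coincides with picking uniformly among all $v(n)=n\,t(n)$ vertices, so both conventions give $p_k(n)=t_k(n)/v(n)$.
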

\begin{proof} To find the average, we divide by ${2n -2\choose n-1}$.
\end{proof}
This gives the sequence of values 1, 1/2, 1/6, 1/22, 1/86,..., and also shows that as we progress to a higher level of protection, we lose about 1/4 of the vertices each time. The result for $k=2$ agrees with the result by Cheon and Shapiro \cite{cheon}.

For numerical justification, we have, if $k=3$,
$$ \frac{[x^{50}] T_3(x)}{[x^{50}] \frac {x}{\sqrt{1-4x}}}=\frac {88 972 411 304 864 387 146 864 997}{1959816327613912069440802200} \approx 0.0453986$$
and $\frac 1{22} =0.0\overline{45}$.

There are some other interesting questions which can be answered using $R_k(x)$. The height of a tree is defined as the longest path from the root to a leaf. The function $R_k(x)$ enumerates instead by the \textit{shortest} path from the root to a leaf.

\begin{theorem}
Let $r_k(n)$ be the number of trees on $n$ vertices whose root is $k$-protected. Then

$$r_k(n) \sim \frac{9}{4^{1-k}+4+4^k}\cdot c_{n-1}$$
where $c_{n-1}$ denotes the $(n-1)$st Catalan number.
\end{theorem}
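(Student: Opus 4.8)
The plan is to apply Bender's Lemma to $R_k(x)$ directly, in complete analogy with the proof of Theorem 2. First I would recall from Theorem 1 that for $k \geq 2$,
$$R_k(x) = \frac{x^{k-2}\left(n_k(x) - \sqrt{1-4x}\right)}{2d_k(x)},$$
and rewrite this by splitting off the algebraic (non-radical) part: since $\sqrt{1-4x}$ is the singular factor, I would express $R_k(x)$ as $\frac{-x^{k-2}}{2d_k(x)}\sqrt{1-4x}$ plus a term analytic on a disk of radius strictly larger than $\tfrac14$. The radical $\sqrt{1-4x}$ has $[x^n]\sqrt{1-4x} = \frac{-1}{2n-1}\binom{2n}{n} \cdot \tfrac12 \sim -\tfrac{1}{2\sqrt{\pi}} n^{-3/2} 4^n$, and more to the point $-\tfrac12\sqrt{1-4x} = T(x) - \tfrac12$ has coefficients $t(n) = c_{n-1}$, the Catalan numbers, for $n \geq 1$. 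So I would write $R_k(x) = \frac{x^{k-2}}{d_k(x)} \cdot \left(T(x) - \tfrac12\right) + (\text{analytic remainder})$, identify $B(z) = T(z) - \tfrac12$ with $b_n = c_{n-1}$ and $b_{n-1}/b_n \to \tfrac14$, and set $A(z) = \frac{z^{k-2}}{d_k(z)}$, which by Lemma 4 is analytic (nonzero denominator) on $|z| \leq \tfrac{4}{15}$ — hence radius of convergence $> \tfrac14 = \beta$. Then $A(\tfrac14) \neq 0$ (one must check $d_k(1/4) \neq 0$, which follows from Lemma 4), and Bender's Lemma gives
$$r_k(n) \sim A(\tfrac14)\, c_{n-1} = \frac{(1/4)^{k-2}}{d_k(1/4)}\, c_{n-1}.$$

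The remaining work is purely the evaluation of $A(1/4) = \frac{(1/4)^{k-2}}{d_k(1/4)}$, which I would carry out exactly as in Theorem 2. From the proof of Theorem 2 we already have $n_k(1/4) = \frac{2 + 4^k}{3 \cdot 4^k}$ and, via Lemma 3, $d_k(1/4) = 16\, n_k^2(1/4)$. Substituting,
$$\frac{(1/4)^{k-2}}{d_k(1/4)} = \frac{(1/4)^{k-2}}{16 \cdot \left(\frac{2+4^k}{3\cdot 4^k}\right)^2} = \frac{16 \cdot 4^{-k} \cdot 9 \cdot 4^{2k}}{16 (2+4^k)^2} = \frac{9 \cdot 4^{k}}{(2+4^k)^2}.$$
Finally I would expand the denominator: $(2 + 4^k)^2 / 4^k = 4 \cdot 4^{-k} + 4 + 4^k = 4^{1-k} + 4 + 4^k$, which yields exactly $r_k(n) \sim \frac{9}{4^{1-k} + 4 + 4^k}\, c_{n-1}$, as claimed.

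The main obstacle — really the only nontrivial point — is justifying that the remainder term after extracting $-\tfrac12\sqrt{1-4x}$ is genuinely analytic past $|z| = \tfrac14$, i.e. that the only singularity of $R_k$ on $|z| = \tfrac14$ is the branch point of the square root and not a pole coming from a zero of $d_k$. This is where Lemma 4 does the work: it shows $d_k$ is zero-free on the closed disk of radius $\tfrac{4}{15} > \tfrac14$, so $\frac{x^{k-2}}{d_k(x)}$ is analytic there, the splitting $R_k(x) = \frac{x^{k-2}}{d_k(x)}\left(T(x)-\tfrac12\right)$ is valid with no extra error term at all (the formula of Theorem 1 is already in product form once we absorb the $\tfrac12 n_k(x)$ piece), and Bender's Lemma applies cleanly. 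I should also note that the case $k = 1$ is handled separately but trivially, since $R_1(x) = \frac{1 - 2x - \sqrt{1-4x}}{2}$ and the same argument with $A(z) = 1$ gives $r_1(n) \sim c_{n-1}$, consistent with the formula at $k=1$: $\frac{9}{4^0 + 4 + 4} = 1$.
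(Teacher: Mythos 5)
Your proposal is correct and follows essentially the same route as the paper: split $R_k(x)$ into a multiple of the singular factor plus a rational remainder (the paper groups the $1$ from $n_k(x)$ with $-\sqrt{1-4x}$ to make the main term $\frac{x^{k-2}}{d_k(x)}T(x)$, while you absorb all of $n_k(x)$ into the remainder --- an immaterial difference, since $T(x)$ and $T(x)-\tfrac12$ have the same coefficients $c_{n-1}$ for $n\geq 1$), use Lemma 4 to discard the remainder and apply Bender's Lemma with $A(x)=x^{k-2}/d_k(x)$, and evaluate $A(1/4)$ via $n_k(1/4)=\frac{2+4^k}{3\cdot 4^k}$ and $d_k(1/4)=16\,n_k^2(1/4)$ exactly as in the paper. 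The only blemish is the parenthetical claim $[x^n]\sqrt{1-4x}=\frac{-1}{2(2n-1)}{2n \choose n}$, which is off by a factor of $2$ (the correct value is $-2c_{n-1}$), but that aside plays no role since your argument actually runs through $T(x)-\tfrac12=-\tfrac12\sqrt{1-4x}$.
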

\begin{proof}
We have that $$R_k(x)=\frac{x^{k-2}(1-\sqrt{1-4x})}{2d_k(x)}-  \frac{\sum_{i=1}^k x^{i+k-2}}{d_k(x)}.$$
The term on the right is once again asymptotically irrelevant, so we have
$$R_k(x)=\frac{x^{k-2}}{d_k(x)} \cdot T(x)-\frac{\sum_{i=1}^k x^{i+k-2}}{d_k(x)}.$$
Thus we have
$$\frac{(1/4)^{k-2}}{d_k(1/4)}\cdot  c_{n-1}=\frac{(1/4)^{k-2}}{16 n_k^2(1/4)}\cdot  c_{n-1}=\frac{1}{4^k \left( \frac{2+4^k}{3 \cdot 4^k}\right)^2}\cdot  c_{n-1}$$
$$=\frac{9}{4^{-k}(4+4^{k+1}+4^{2k})} \cdot c_{n-1}=\frac{9}{4^{1-k}+4+4^k}\cdot c_{n-1}.$$
\end{proof}
This gives the sequenced of values 1, 1, 4/9, 16/121, 64/1849, ..., and once again shows that we lose about 1/4 of the trees each time we progress to a higher level of protection.

Recall that the \textit{rank} of a vertex is the distance of the shortest downward path from a vertex to a leaf.

\begin{corollary}
The number of vertices of rank $k$ in trees of size $n$ approaches the value $\frac{ 9 {2n -2\choose n-1}}{10+4^{1-k}+4^{1+k}}$.
\end{corollary}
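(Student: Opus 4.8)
The plan is to reduce the statement to Theorem 3 by a trivial inclusion on ranks. Recall that a vertex is $k$-protected exactly when its shortest downward path to a leaf has length at least $k$, so a vertex has rank \emph{equal} to $k$ precisely when it is $k$-protected but not $(k+1)$-protected. Since every $(k+1)$-protected vertex is automatically $k$-protected, the $(k+1)$-protected vertices of a tree form a subset of its $k$-protected vertices; hence the number of vertices of rank $k$ among all trees of size $n$ is exactly $t_k(n)-t_{k+1}(n)$.

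With this in hand I would invoke Theorem 3 twice, once for $k$ and once for $k+1$, obtaining $t_k(n)\sim \frac{3}{4^k+2}{2n-2\choose n-1}$ and $t_{k+1}(n)\sim \frac{3}{4^{k+1}+2}{2n-2\choose n-1}$. The one point needing a word of care is that asymptotic equivalences cannot in general be subtracted; here it is legitimate because the two leading constants $\tfrac{3}{4^k+2}$ and $\tfrac{3}{4^{k+1}+2}$ are distinct, so their difference is a \emph{nonzero} constant $c_k$, and therefore $t_k(n)-t_{k+1}(n)\sim c_k{2n-2\choose n-1}$.

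It then remains to identify $c_k$ with $\frac{9}{10+4^{1-k}+4^{1+k}}$, which is a short computation: combining the two fractions gives
\[
\frac{3}{4^k+2}-\frac{3}{4^{k+1}+2}=\frac{3(4^{k+1}-4^k)}{(4^k+2)(4^{k+1}+2)}=\frac{9\cdot 4^k}{(4^k+2)(4^{k+1}+2)},
\]
and expanding $(4^k+2)(4^{k+1}+2)=4\cdot 4^{2k}+10\cdot 4^k+4$ and then dividing numerator and denominator by $4^k$ yields $\frac{9}{4^{1+k}+10+4^{1-k}}$, as claimed.

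The main obstacle is essentially nonexistent: all the analytic work is already contained in Theorems 1 and 3, and what is left is the observation that rank-$k$ vertices are the difference of the $k$- and $(k+1)$-protected ones, the harmless check that the leading constants differ so that the subtraction of asymptotics is valid, and the elementary algebra above. One should note the statement is meant for $k$ in the range where Theorem 3 applies; the endpoint cases (where the constants $\tfrac{3}{4^k+2}$ specialize to $1$ and $\tfrac12$, matching the counts of all vertices and of non-leaf vertices) can be included by the same subtraction once Theorem 3 is read in those cases.
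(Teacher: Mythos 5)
Your proposal is correct and follows essentially the same route as the paper: identify rank-$k$ vertices as the $k$-protected minus the $(k+1)$-protected ones, apply Theorem 3 twice, and simplify the difference of constants. Your added remark that the subtraction of asymptotics is legitimate because the two leading constants differ is a small point of care the paper leaves implicit.
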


\begin{proof}
It follows immediately from the definition that a vertex has rank $k$ if and only if it is $k$-protected but not $(k+1)$-protected, so by Theorem 3, the number of vertices of rank $k$ approaches
$$\frac{3 {2n -2\choose n-1}}{(4^k+2)}-\frac{3 {2n -2\choose n-1}}{(4^{k+1}+2)}=3 {2n -2\choose n-1} \cdot \left( \frac{4^{k+1}+2-4^k-2}{4^{2k+1}+2(4^k)+2(4^{k+1})+4}\right)$$
$$=3 {2n -2\choose n-1} \cdot \frac{4^k (4-1)}{4^k(4^{k+1}+2+2(4)+4^{1-k})}=\frac{ 9 {2n -2\choose n-1}}{10+4^{1-k}+4^{1+k}}.$$
\end{proof}

\section{Expectations}

\begin{theorem}
Let $E_R(n)$ denote the expected value of the rank of the root of a tree of size n. Then
\begin{equation}
E_R(n) \sim \sum_{k=1}^\infty \frac{9}{4^{1-k}+4+4^k} \approx  1.62297.
\end{equation}
\end{theorem}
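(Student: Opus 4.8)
The plan is to realize the expected root rank as a sum of tail probabilities, $\sum_{k\ge1}\Pr[\text{the root is }k\text{-protected}]$, and to evaluate this by applying Bender's Lemma once to a single generating function. Since the rank is a nonnegative integer and ``rank $\ge k$'' means precisely ``$k$-protected,'' we have $E_R(n)=\sum_{k\ge1}r_k(n)/t(n)$; because the smallest tree whose root is $k$-protected is the path on $k+1$ vertices, this sum is finite for each $n$, so $E_R(n)=\frac{1}{t(n)}[x^n]\mathcal R(x)$ with $\mathcal R(x):=\sum_{k\ge1}R_k(x)$ and $t(n)=[x^n]T(x)=c_{n-1}$.

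First I would rewrite $\mathcal R$ in a Bender-friendly form. Starting from $R_1(x)=T(x)-x$ and, for $k\ge2$, the identity $R_k(x)=\frac{x^{k-2}}{d_k(x)}T(x)-\frac{\sum_{i=1}^{k}x^{i+k-2}}{d_k(x)}$ (which is just the closed form for $R_k$ rewritten using $n_k(x)-1=-2\sum_{i=1}^{k}x^i$), summing over $k$ gives $\mathcal R(x)=\tilde H(x)\,T(x)-\tilde G(x)$, where $\tilde H(x)=1+\sum_{k\ge2}\frac{x^{k-2}}{d_k(x)}$ and $\tilde G(x)=x+\sum_{k\ge2}\frac{\sum_{i=1}^{k}x^{i+k-2}}{d_k(x)}$.

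The one genuinely nonroutine point — and the step I expect a careful reader to scrutinize — is showing that $\tilde H$ and $\tilde G$ are analytic on a disk of radius strictly greater than $\frac14$. Here the proof of the lemma asserting that $d_k(x)$ is never zero on the closed disk of radius $\frac4{15}$ actually yields more: a uniform bound $|d_k(x)|\ge c_0>0$ on that whole disk, valid for all $k$ (take $c_0$ to be the minimum of the finitely many explicit lower bounds appearing there, e.g. $c_0=\frac{131}{3375}$). Since $\frac4{15}>\frac14$, and on $|x|\le\frac4{15}$ the $k$th term of $\tilde H$ has modulus at most $\left(\frac4{15}\right)^{k-2}/c_0$ while that of $\tilde G$ is at most $\frac{15}{11}\left(\frac4{15}\right)^{k-1}/c_0$, both series converge absolutely and uniformly on $|x|\le\frac4{15}$; hence $\tilde H$ and $\tilde G$ are analytic there and have radius of convergence at least $\frac4{15}$. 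The uniformity in $k$ is exactly the thing I would spell out in detail.

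To finish: since $\tilde G$ has radius of convergence larger than $\frac14$, its coefficients are $o(c_{n-1})$ and do not affect the asymptotics. Applying Bender's Lemma with $A=\tilde H$ (radius $\ge\frac4{15}>\frac14$) and $B=T$ (radius $\frac14$, with $[x^{n-1}]T/[x^n]T=c_{n-2}/c_{n-1}\to\frac14$ and $\tilde H(1/4)\ne0$) gives $[x^n]\big(\tilde H(x)T(x)\big)\sim\tilde H(1/4)\,c_{n-1}$, hence $E_R(n)\to\tilde H(1/4)$. Finally I would evaluate $\tilde H(1/4)=1+\sum_{k\ge2}(1/4)^{k-2}/d_k(1/4)$: combining $n_k^2(x)-(1-4x)=4x^3d_k(x)$ (so $d_k(1/4)=16\,n_k^2(1/4)$) with $n_k(1/4)=\frac{4^k+2}{3\cdot4^k}$ gives $(1/4)^{k-2}/d_k(1/4)=\frac{9}{4^{1-k}+4+4^k}$ for every $k\ge2$, and since the same expression equals $\frac{9}{9}=1$ at $k=1$ we obtain $\tilde H(1/4)=\sum_{k\ge1}\frac{9}{4^{1-k}+4+4^k}$; a numerical evaluation of this geometrically convergent series gives $\approx 1.62297$.
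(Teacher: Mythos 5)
Your proposal is correct and follows essentially the same route as the paper's proof: realize $E_R(n)$ as the tail sum $\sum_k r_k(n)/t(n)$, write $\sum_k R_k(x)$ as (analytic factor)$\cdot T(x)$ minus an asymptotically irrelevant part, and apply Bender's Lemma once, evaluating at $x=1/4$ via $d_k(1/4)=16\,n_k^2(1/4)$. You are somewhat more careful than the paper in two places — you make the uniformity in $k$ of the bound $|d_k(x)|\ge c_0$ explicit, and you treat the $k=1$ term separately as $R_1=T(x)-x$ rather than forcing it into the $k\ge2$ closed form — but these are refinements of the same argument, not a different one.
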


\begin{theorem}
Let $E_T(n)$ denote the expected value of the rank of a random vertex in a tree of size n. Then
\begin{equation}
E_T(n) \sim \sum_{k=1}^\infty \frac{3}{4^k+2} \approx 0.727649.
\end{equation}
\end{theorem}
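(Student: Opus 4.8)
The plan is to reduce the statement to an interchange of $\lim_{n\to\infty}$ with an infinite sum, and then to carry out that interchange on the generating-function level, reusing the computation from the proof of Theorem 3. Since the rank of a vertex is a nonnegative integer and a vertex has rank at least $k$ exactly when it is $k$-protected, for a random vertex in a random tree of size $n$ (in the model of the Corollary) we have
$$E_T(n)=\sum_{k\ge 1}p_k(n),$$
a sum that is finite for each fixed $n$ because a tree on $n$ vertices has no $k$-protected vertex once $k\ge n$. By the Corollary giving $p_k(n)\sim\frac{3}{4^k+2}$ we know $p_k(n)\to\frac{3}{4^k+2}$ for each fixed $k$, so the theorem amounts to the identity $\lim_{n\to\infty}\sum_{k\ge1}p_k(n)=\sum_{k\ge1}\lim_{n\to\infty}p_k(n)$.

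To establish this I would sum the identity $T_k(x)=L(x)R_k(x)$ over $k$. For $k\ge 2$ the proof of Theorem 3 already writes
$$T_k(x)=\frac{x}{\sqrt{1-4x}}\,g_k(x)-h_k(x),\qquad g_k(x)=\frac{x^{k-3}(n_k(x)-1+4x)}{4d_k(x)},\quad h_k(x)=\frac{x^{k-2}(n_k(x)+1)}{4d_k(x)},$$
and a direct expansion of $L(x)R_1(x)$ gives the same shape for $k=1$, namely $T_1(x)=\frac{x}{\sqrt{1-4x}}\,g_1(x)-\frac x2-\frac{\sqrt{1-4x}}{4}$ with $g_1(x)=\frac{1-2x}{4x}$. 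Setting $G(x)=\sum_{k\ge1}g_k(x)$ we obtain
$$S(x):=\sum_{k\ge1}T_k(x)=\frac{x}{\sqrt{1-4x}}\,G(x)-\Big(\tfrac x2+\tfrac{\sqrt{1-4x}}{4}+\sum_{k\ge2}h_k(x)\Big).$$
The point is that $\sum_{k\ge2}g_k(x)$ and $\sum_{k\ge2}h_k(x)$ converge uniformly on the closed disk $|x|\le\tfrac4{15}$: the coefficients of $n_k(x)$ are bounded by $2$ so $|n_k(x)|$ is bounded there uniformly in $k$, the powers $x^{k-3}$ and $x^{k-2}$ are geometrically small, and — the crucial input — the argument proving that $d_k$ does not vanish on that disk in fact yields a lower bound $|d_k(x)|\ge\delta>0$ uniform in $k$ and in $|x|\le\tfrac4{15}$, so the Weierstrass $M$-test applies. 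Consequently $G(x)$ is analytic on $|x|<\tfrac4{15}$ apart from the simple pole at $0$ contributed by $g_1$, so $xG(x)$ is analytic there with radius of convergence exceeding $\tfrac14$; and the bracketed remainder has coefficients that are $o(v(n))$, its three pieces being respectively a polynomial, a function with only a $\sqrt{1-4x}$-type singularity, and a function analytic past $\tfrac14$.

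Now I would apply Bender's Lemma to $\dfrac{xG(x)}{\sqrt{1-4x}}$ with $A(x)=xG(x)$ and $B(x)=\dfrac1{\sqrt{1-4x}}$: here $B$ has radius $\tfrac14$, $b_n=\binom{2n}{n}$ and $b_{n-1}/b_n\to\tfrac14$, $A$ is analytic with larger radius, and $A(\tfrac14)=\tfrac14 G(\tfrac14)\ne0$; hence $[x^n]S(x)\sim\tfrac14 G(\tfrac14)\binom{2n}{n}$. Since $\binom{2n}{n}\sim4\binom{2n-2}{n-1}=4v(n)$, this is $\sim G(\tfrac14)\,v(n)$, and dividing by $v(n)$ gives $E_T(n)\to G(\tfrac14)$. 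It remains to evaluate $G(\tfrac14)$: for $k\ge2$ the proof of Theorem 3 shows $g_k(\tfrac14)=\frac{3}{4^k+2}$ (using $n_k(\tfrac14)=\frac{2+4^k}{3\cdot4^k}$ and $d_k(\tfrac14)=16\,n_k(\tfrac14)^2$, the latter from Lemma 3), while $g_1(\tfrac14)=\frac{1-1/2}{4\cdot1/4}=\frac12=\frac{3}{4^1+2}$; therefore $G(\tfrac14)=\sum_{k\ge1}\frac{3}{4^k+2}$, which evaluates numerically to about $0.727649$.

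I expect the only real obstacle to be the uniform-in-$k$ estimate on $d_k$ over the disk of radius $\tfrac4{15}$ — upgrading the nonvanishing lemma to a uniform lower bound, which is exactly what makes $G$ genuinely analytic past $\tfrac14$ and legitimizes the term-by-term manipulations above. Everything after that is a routine application of Bender's Lemma together with arithmetic already carried out in Section 3. (Alternatively one could argue probabilistically via Tannery's theorem, dominating $p_k(n)$ by a summable sequence $M_k$ independent of $n$; exhibiting such an $M_k$ relies on the same uniform control of $d_k$.)
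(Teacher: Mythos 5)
Your proof follows essentially the same route as the paper: the paper's proof of this theorem is the single sentence ``similar to that of Theorem 5 with $R_k$ replaced by $T_k$,'' and Theorem 5 is proved exactly by summing the generating functions over $k$, splitting off a multiple of $\frac{1}{\sqrt{1-4x}}$ (resp.\ $T(x)$) whose coefficient is analytic past $1/4$ thanks to the non-vanishing of $d_k$, discarding the asymptotically irrelevant remainder, and applying Bender's Lemma. You supply more detail than the paper does --- notably the explicit $k=1$ term and the observation that a uniform-in-$k$ lower bound on $|d_k(x)|$ is the input that legitimizes the term-by-term manipulations --- but the argument is the same.
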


\begin{proof}[Proof of Theorems 5]
Let $E_R(x)=\sum_{n=1}^\infty r(n) x^n$ where $r(n)$ is the sum of the ranks of the roots of all trees of size $n$. Clearly $r(n)= \sum_{k=1}^n [x^n] R_k(x)$, since the trees with roots of rank one is counted once by $R_1(x)$, the trees with roots of rank two are counted once by $R_1(x)$ and once by $R_2(x)$, and similarly the trees with roots of rank $n$ are counted once by each $R_k(x)$ for $1 \leq k \leq n$. Since $R_k(x)$ has no terms of degree lower than $k$, their sum converges as a formal power series. It follows that 
$$E_R(x)= \sum_{k=1}^\infty R_k(x)=\sum_{k=1}^\infty \left( \frac{x^{k-2}(1-\sqrt{1-4x})}{2d_k(x)}-\frac{\sum_{i=1}^{k-2} x^{i+k-2}}{d_k(x)}\right) .$$
The sums $\sum_{k=1}^\infty \frac{x^{k-2}}{d_k(x)}$ and $\sum_{k=1}^\infty \frac{\sum_{i=1}^{k-2}x^{i+k-2}}{d_k(x)}$ both converge absolutely on the closed disk of radius 1/4, so we can split the sum as follows: 
$$\sum_{k=1}^\infty R_k(x) = \sum_{k=1}^\infty \left( \frac{x^{k-2}}{d_k(x)} \cdot T(x) - \frac{\sum_{i=1}^{k-2}x^{i+k-2}}{d_k(x)} \right) $$
$$= \sum_{k=1}^\infty \frac{x^{k-2}}{d_k(x)} \cdot T(x) - \sum_{k=1}^\infty \frac{\sum_{i=1}^{k-2}x^{i+k-2}}{d_k(x)}.$$

Since the coefficient of $T(x)$ and the double sum on the right both converge absolutely to bounded, analytic functions on the closed disk of radius 5/14, it follows that the term on the right remains asymptotically irrelevant, and that we may apply Bender's Lemma to the term on the left, and the result follows.

\end{proof}
\begin{proof}[Proof of Theorem 6]
The proof is similar to that of Theorem 5 with the role of $R_k$ replaced by $T_k$.
\end{proof}

For numerical justification, we may compute the $n$th coefficient using only the $(n-1)$st partial sum, since there are no vertices or roots which are $n$-protected in a tree of size $n$. We have that
$$\frac{[x^{50}]\sum_{k=1}^{49}R_k(x)}{[x^{50}]T(x)}=\frac{1874097069430998779470999}{1152833133890536511435766} \approx 1.62564$$

and
$$\frac{[x^{50}] \sum_{k=1}^{49} T_k(x)}{[x^{50}] V(x)}=\frac{4630522930774422812075437903}{6369403064745214225682607150} \approx 0.726995.$$


\bibliographystyle{amsplain}
\bibliography{bib}
\end{document}